\newtheorem{precor}{{\bf Corollary}}
\newenvironment{cor}{\begin{precor}{\hspace{-0.5
               em}{\bf.\ }}}{\end{precor}}
\newtheorem{precon}{{\bf Conjecture}}
\newtheorem{prealphcon}{{\bf Conjecture}}
\newenvironment{alphcon}{\begin{prealphcon}{\hspace{-0.5
               em}{\bf.\ }}}{\end{prealphcon}}
\newtheorem{predefin}{{\bf Definition}}
\newtheorem{preexm}{{\bf Example}}
\newtheorem{preappl}{{\bf Application}}
\newtheorem{prelem}{{\bf Lemma}}
\newenvironment{lem}{\begin{prelem}{\hspace{-0.5
               em}{\bf.\ }}}{\end{prelem}}
\newtheorem{preproof}{{\bf Proof.\ }}
\newenvironment{proof}[1]{\begin{preproof}{\rm
               #1}\hfill{$\blacksquare$}}{\end{preproof}}
\newtheorem{prethm}{{\bf Theorem}}
\newenvironment{thm}{\begin{prethm}{\hspace{-0.5
               em}{\bf.\ }}}{\end{prethm}}
\newtheorem{prealphthm}{{\bf Theorem}}
\newenvironment{alphthm}{\begin{prealphthm}{\hspace{-0.5
               em}{\bf.\ }}}{\end{prealphthm}}
\newtheorem{prealphlem}{{\bf Lemma}}
\newtheorem{prepro}{{\bf Proposition}}
\newtheorem{preprb}{{\bf Problem}}
\newtheorem{preapp}{{\bf Application}}
\newtheorem{prequ}{{\bf Question}}
\def\conct[#1,#2]{\mbox {${#1} \leftrightarrow {#2}$}}
\def\dconct[#1,#2]{\mbox {${#1} \rightarrow {#2}$}}
\def\deg[#1,#2]{\mbox {$d_{_{#1}}(#2)$}}
\def\mindeg[#1]{\mbox {$\delta_{_{#1}}$}}
\def\maxdeg[#1]{\mbox {$\Delta_{_{#1}}$}}
\def\outdeg[#1,#2]{\mbox {$d_{_{#1}}^{^+}(#2)$}}
\def\minoutdeg[#1]{\mbox {$\delta_{_{#1}}^{^+}$}}
\def\maxoutdeg[#1]{\mbox {$\Delta_{_{#1}}^{^+}$}}
\def\indeg[#1,#2]{\mbox {$d_{_{#1}}^{^-}(#2)$}}
\def\minindeg[#1]{\mbox {$\delta_{_{#1}}^{^-}$}}
\def\maxindeg[#1]{\mbox {$\Delta_{_{#1}}^{^-}$}}
\def\isdef{\mbox {$\ \stackrel{\rm def}{=} \ $}}
\def\dre[#1,#2,#3]{\mbox {${\cal E}^{^{#3}}(#1,#2)$}}
\def\var[#1,#2]{\mbox {${\rm Var}_{_{#1}}(#2)$}}
\def\ls[#1]{\mbox {$\xi^{^{#1}}$}}
\def\hom[#1,#2]{\mbox {${\rm Hom}({#1},{#2})$}}
\def\onvhom[#1,#2]{\mbox {${\rm Hom^{v}}(#1,#2)$}}
\def\onehom[#1,#2]{\mbox {${\rm Hom^{e}}(#1,#2)$}}
\def\core[#1]{\mbox {$#1^{^{\bullet}}$}}
\def\cay[#1,#2]{\mbox {${\rm Cay}({#1},{#2})$}}
\def\sch[#1,#2,#3]{\mbox {${\rm Sch}({#1},{#2},{#3})$}}
\def\cays[#1,#2]{\mbox {${\rm Cay_{s}}({#1},{#2})$}}
\def\dirc[#1]{\mbox {$\stackrel{\rightarrow}{C}_{_{#1}}$}}
\def\cycl[#1]{\mbox {${\bf Z}_{_{#1}}$}}
\begin{document}
\begin{center}
{\Large \bf On the b-chromatic number of Kneser Graphs}\\
\vspace{0.3 cm}
{\bf Hossein Hajiabolhassan}\\
{\it Department of Mathematical Sciences}\\
{\it Shahid Beheshti University, G.C.}\\
{\it P.O. Box {\rm 1983963113}, Tehran, Iran}\\
{\tt hhaji@sbu.ac.ir}\\ \ \\
\end{center}
\begin{abstract}
\noindent In this note, we prove that for any integer $n\geq 3$
the b-chromatic number of the Kneser graph $KG(m,n)$ is greater
than or equal to $2{\lfloor {m\over 2} \rfloor \choose n}$. This
gives an affirmative answer to a conjecture of \cite{6}.\\

\noindent {\bf Keywords:}\ { chromatic number, b-chromatic number.}\\
{\bf Subject classification: 05C}
\end{abstract}
\section{Introduction}
The {\it b-chromatic number} of a graph $G$, denoted by
$\chi_b(G)$, is the largest positive integer $t$ such that there
exists a proper coloring for $G$ with $t$ colors in which every
color class contains at least one vertex adjacent to some vertex
in all the other color classes. Such a coloring is called a
b-coloring. Consider a b-coloring for $G$. If $v\in V(G)$ has a
neighbor in any other color class, $v$ is called a {\it
color-dominating vertex} (or simply {\it dominating vertex}). This
concept was introduced in 1999 by Irving and Manlove \cite{5} who
proved that determining $\chi_b(G)$ is $NP$-hard in general and
polynomial for trees. The b-chromatic number of graphs has
received attention recently, see [1-7, 9-11].

Hereafter, the symbol $[m]$ stands for the set $\{0,1,\ldots,
m-1\}$. For a subset $X\subseteq [m]$ denote by ${X \choose n}$
the collection of all $n$-subsets of $X$. Also, the symmetric
difference of the sets $A$ and $B$ is commonly denoted by $A
\Delta B$. Assume that $m \geq 2n$. The {\it Kneser graph}
$KG(m,n)$ is the graph with vertex set ${[m] \choose n}$, in
which $A$ is connected to $B$ if and only if $A \cap B =
\emptyset$. It was conjectured by Kneser \cite{KNE} in 1955, and
proved by Lov\'{a}sz \cite{LOV} in 1978, that
$\chi(KG(m,n))=m-2n+2$.

The b-chromatic number of Kneser graphs has been investigated in
\cite{6}.

\begin{alphcon}\label{omja} {\rm \cite{6}}
For every positive integer $n$, we have
$\chi_b(KG(m,n))=\Theta(m^n)$.
\end{alphcon}

In this regard, it was proved in \cite{6} that
$\chi_b(KG(m,2))=\Theta(m^2)$.

\begin{alphthm}\label{n=2}{\rm \cite{6}}
Let  $m$ be a positive integer and $m\neq 8$. Then
$$\chi_b(KG(m,2))= \left \{ \begin{array}{ll}
\lfloor \frac{m(m-1)}{6} \rfloor& if\ m\ is\ odd\\
 &\\
\lfloor \frac{(m-1)(m-2)}{6}\rfloor+3 & if\ m\ is\ even.
 \end{array}\right. $$
\end{alphthm}

In the next section, we present a lower bound for the b-chromatic
number of Kneser graphs. Next, we show that for any positive
integer $n$, $\chi_b(KG(m,n))=\Theta(m^n)$.
\section{Kneser Graphs}
In what follows we are concerned with some results concerning the
b-chromatic number of Kneser graphs.

\begin{lem}\label{fun}
Let $s$ be a positive integer. If $r \geq 2s+1$, then there
exists a function $g: {[r] \choose s} \longrightarrow {[r]
\choose 2s}$ which satisfies the following conditions.
\begin{enumerate}
\item[{\rm I)}] For any $A\in {[r] \choose s}$, $A\subset g(A)$.
\item[{\rm II)}] For any $A, B\in {[r] \choose s}$, $g(A) \neq g(B)$
whenever $A\cap B=\emptyset$.
\end{enumerate}
\end{lem}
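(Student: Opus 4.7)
Set $g(A) := A \cup f(A)$, where $f(A)$ is an $s$-subset of $[r] \setminus A$. Condition (I) is then automatic, and (II) is equivalent to the requirement that there is no pair of disjoint $A, B \in {[r] \choose s}$ with $f(A) = B$ and $f(B) = A$. Indeed, if $g(A) = g(B)$ with $A \cap B = \emptyset$, then the $2s$-set $g(A) = g(B)$ contains $A \cup B$, which already has $2s$ elements, so $g(A) = g(B) = A \cup B$, forcing $f(A) = B$ and $f(B) = A$.

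So it suffices to construct such an $f$, which I would do by induction on $r$. For the base case $r = 2s + 1$, every $2s$-subset $g(A) \subset [r]$ is the complement of a single element $z_A \in [r] \setminus A$, and the ``no $2$-cycle'' condition simplifies to $z_A \neq z_B$ for every disjoint pair $A, B$; since $|A \cup B| = 2s = r - 1$, the only candidate for a common value is the unique $z^* \in [r] \setminus (A \cup B)$. I would start with $z_A := \min([r] \setminus A)$; the only failing configurations are the disjoint pairs $A, B$ that partition $\{1, \ldots, 2s\}$, for which $z^* = 0$ and $z_A = z_B = 0$. A small correction---for example, redefining $z_A := \min([r] \setminus A \setminus \{0\})$ precisely when $0 \notin A$ and $1 \in A$---eliminates these bad pairs, and a short case analysis on which of $\{0, 1\}$ each of $A, B$ contains confirms that no new conflicts are introduced.

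For the inductive step $r \geq 2s + 2$, assume $g' : {[r-1] \choose s} \to {[r-1] \choose 2s}$ has been constructed satisfying (I) and (II). Set $g(A) := g'(A)$ when $r - 1 \notin A$, and let $g(A)$ be any $2s$-superset of $A$ in $[r]$ when $r - 1 \in A$. For a disjoint pair $A, B$: if neither contains $r - 1$, distinctness follows from the inductive hypothesis applied to $g'$; if one contains $r - 1$ (at most one can, by disjointness), then its $g$-value contains $r - 1$ while the other is contained in $[r-1]$, and the two values differ.

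The main obstacle is the base case, since the rule defining $z_A$ must genuinely distinguish the two members of each disjoint pair; the naive ``smallest available'' choice fails symmetrically on the natural partitions of $\{1, \ldots, 2s\}$, and the local patch must be checked to produce no new conflicts. Once the base case is settled, the inductive step is routine, as the extra room in $[r]$ for $r > 2s + 1$ makes the free choices on $s$-sets containing $r - 1$ harmless.
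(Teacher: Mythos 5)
Your proof is correct, but it takes a genuinely different route from the paper's. The paper gives a single global formula: $g(A) := A\cup\{\min A+1,\min A+2,\ldots,\min A+d(A)\}$ with addition modulo $r$, where $d(A)$ is chosen minimally so that the union has $2s$ elements; injectivity on disjoint pairs then follows because disjoint sets have distinct minima and, if $g(A)=g(B)$ held, the two cyclic intervals anchored at $\min A$ and $\min B$ would together cover all of $[r]$ inside $A\cup B$, which is impossible since $|A\cup B|=2s<r$. You instead first make the clean (and correct) observation that, writing $g(A)=A\cup f(A)$, condition (II) is exactly the absence of a $2$-cycle $f(A)=B$, $f(B)=A$ on disjoint pairs, and then you induct on $r$: the inductive step is indeed routine (a set containing the new point $r-1$ has a $g$-value containing $r-1$, while all others land in ${[r-1]\choose 2s}$), and all the content sits in the base case $r=2s+1$, where $g(A)$ is determined by the single omitted element $z_A$. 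Your identification of the failures of $z_A=\min([r]\setminus A)$ as precisely the disjoint pairs partitioning $\{1,\ldots,2s\}$ is right, and your patch (using $\min([r]\setminus A\setminus\{0\})$ when $0\notin A$ and $1\in A$) does repair them without creating new conflicts: in any alleged conflict $z_A=z_B$ the common value must be the unique element outside $A\cup B$, and the three cases (patch applies to neither, one, or both of $A,B$) each lead to an element forced into $A\cap B$ or into a set it was assumed to avoid. The trade-off: the paper's construction is uniform and formula-based but its verification is left terse ("readily seen") and actually requires the covering argument above; your approach isolates the combinatorial content in a small, fully checkable base case at the cost of an ad hoc patch and a short case analysis.
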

\begin{proof}{For any $A\in {[r] \choose s}$ let $d(A)$ be the smallest positive integer such that $|A
\cup \{\min A+1, \min A+2, \ldots, \min A+d(A) \}|=2s$ where
addition is taken modulo $r$. Now, for any $A\in {[r] \choose s}$
set
$$g(A) \isdef A\cup \{\min A+1, \min A+2, \ldots, \min A+d(A) \}.$$
Note that for any $A, B\in {[r] \choose s}$, if $A\cap
B=\emptyset$, then $\min A \not = \min B$. Hence, it is readily
seen that $g(A) \neq g(B)$ whenever $A\cap B=\emptyset$, as
desired.}
\end{proof}

Here we present a lower bound for the b-chromatic number of the
Kneser graph $KG(m,n)$ provided that $n\geq 3$.

\begin{thm}\label{b}
Let $n\geq 3$ be an integer. If $m \geq 2n$, then
$\chi_b(KG(m,n))\geq 2{\lfloor {m\over 2} \rfloor \choose n}$.
\end{thm}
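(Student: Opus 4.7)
The plan is to exhibit an explicit proper coloring of $KG(m,n)$ with $t := 2{r \choose n}$ colors, where $r = \lfloor m/2 \rfloor$, together with $t$ designated color-dominating vertices. I would split $[m]$ as $X \cup Y \cup Z$, where $X = \{0, 1, \ldots, r-1\}$, $Y = \{r, \ldots, 2r-1\}$, and $Z = [m] \setminus (X \cup Y)$ is empty or a singleton, and let $\tau : X \to Y$ be the bijection $\tau(i) = i + r$. Index the palette as $\{c_T, c'_T : T \in {X \choose n}\}$ and designate $T \mapsto c_T$ and $\tau(T) \mapsto c'_T$ for each $T \in {X \choose n}$ to be the color-dominating representatives. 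Because $T \subseteq X$ is disjoint from $\tau(T') \subseteq Y$, every $T$ already sees every primed color at $\tau(T')$, and symmetrically every $\tau(T)$ already sees every unprimed color at $T'$; so the only remaining dominance obligation is to produce, for each $T \neq T'$ in ${X \choose n}$, a neighbor of $T$ colored $c_{T'}$, and symmetrically for $\tau(T)$.

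To extend the coloring to all remaining vertices, I would attach to each $C \in {[m] \choose n}$ its projection $\bar{C} := (C \cap X) \cup \tau^{-1}(C \cap Y) \subseteq X$, which has size $n$ unless $C$ contains some conjugate pair $\{i, \tau(i)\}$ or an element of $Z$. In the nondegenerate regime $|\bar{C}| = n$, the color is $c_{\bar{C}}$ or $c'_{\bar{C}}$ according to a sign $\epsilon(C) \in \{0,1\}$, tuned so that $\phi(T) = c_T$, $\phi(\tau(T)) = c'_T$, and any two disjoint nondegenerate $C, D$ with $\bar{C} = \bar{D}$ receive opposite signs; this last property is achievable because for such $C, D$ the binary vectors $(\chi_i(C))_{i \in \bar{C}}$ and $(\chi_i(D))_{i \in \bar{C}}$, recording whether $i$ or $\tau(i)$ lies in the vertex, are bitwise complementary. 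The natural witness for $c_{T'}$ adjacent to $T$ is $C_{T,T'} := (T' \setminus T) \cup \tau(T' \cap T)$, which is disjoint from $T$ with $\bar{C}_{T,T'} = T'$, and the sign convention must force $\phi(C_{T,T'}) = c_{T'}$ for every pair $T \neq T'$. In the degenerate regime $|\bar{C}| < n$, I would apply Lemma~\ref{fun} with $s = |\bar{C}|$ to canonically enlarge $\bar{C}$ inside $X$ to an $n$-subset, using the guarantee that disjoint $s$-subsets of $[r]$ receive distinct $2s$-extensions to prevent color collisions between adjacent degenerate vertices.

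The main obstacle is the simultaneous design of the sign $\epsilon$ and of the degenerate-case extension. A naive choice such as $\epsilon(C) := \chi_{\min \bar{C}}(C)$ is compatible with the representative assignments and with properness within each projection-class, but it fails to place every witness $C_{T,T'}$ into the class $c_{T'}$ precisely when $\min T' \in T$; circumventing this requires either a cleverer choice of distinguished coordinate of $\bar{C}$ or a larger family of witnesses obtained by additionally swapping some elements of $T' \setminus T$ with their $\tau$-images to correct the sign, and in either case the selection must be guided by the function $g$ of Lemma~\ref{fun} so that the entire assignment remains consistent across all configurations of $T \cap T'$. Properness is then verified by a case analysis on the pair $(|\bar{C}|, |\bar{D}|)$: the nondegenerate-equal-projection case is handled by $\epsilon$, the nondegenerate-unequal-projection case is immediate, and the cases involving a degenerate vertex are handled precisely by the distinctness-on-disjoints property of Lemma~\ref{fun}.
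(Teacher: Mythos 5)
Your overall architecture coincides with the paper's: split $[m]$ into two halves $X$ and $Y$ linked by a bijection, take the $n$-subsets of $X$ and of $Y$ as the $2{\lfloor m/2\rfloor \choose n}$ color representatives, fold every other vertex onto one of these classes according to its projection to $X$, and reserve Lemma~\ref{fun} for the ``symmetric'' vertices. But the write-up stops exactly where the real work is. The theorem rests on exhibiting a side-selection rule $\epsilon$ that is simultaneously (a) proper on each projection class and (b) compatible with a system of witnesses realizing every unprimed color in the neighborhood of every representative $T$; you observe yourself that the natural rule $\epsilon(C)=\chi_{\min\bar C}(C)$ violates (b) whenever $\min T'\in T$, and then only gesture at ``a cleverer choice of distinguished coordinate'' or ``a larger family of witnesses'' without producing either. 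Note that no single-coordinate rule can succeed: for any distinguished $j\in T'$ there is a representative $T$ with $j\in T$, and then every nondegenerate neighbor of $T$ projecting to $T'$ is forced to carry $\tau(j)$ rather than $j$, so the class $c_{T'}$ is unreachable from $T$. The paper escapes this by using a \emph{majority} rule ($|A\cap X|$ versus $|A\cap Y|$) as the side selector, together with the witness $C=\{i\}\cup f(B\setminus\{i\})$ for a single $i\in B\setminus A$: since $n\geq 3$ this witness has a strict majority in $Y$, so it is unambiguously sent to an $X$-color containing $\{i\}\cup(B\setminus\{i\})=B$, hence equal to $B$. Your witness $(T'\setminus T)\cup\tau(T'\cap T)$ keeps all of $T'\setminus T$ on the $X$ side, and that is precisely what creates the tie-breaking problem you ran into.

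Two further gaps. First, the degenerate case is mis-specified: Lemma~\ref{fun} sends an $s$-set to a $2s$-set, so it can only absorb vertices whose projection has size exactly $n/2$, i.e.\ vertices that are unions of $n/2$ conjugate pairs (forcing $n$ even). A vertex containing $k$ conjugate pairs with $0<k<n/2$ has projection of size $n-k$, and applying the lemma ``with $s=|\bar C|$'' produces a set of size $2(n-k)\neq n$; such vertices have a strict majority on one side and must be handled by the majority rule, with only the fully symmetric tied case routed through Lemma~\ref{fun} (and the remaining tied cases, for even $n$, through an explicit tie-breaker such as the paper's minimum-of-symmetric-difference rule). Second, when $m$ is odd you never color the vertices meeting $Z$; the paper assigns all of them one additional color class $\{m-n,\dots,m-1\}$ and verifies both that this new class contains a dominating vertex and that the old representatives see it. As it stands, the proposal identifies the correct skeleton but leaves the decisive construction unproved.
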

\begin{proof}{
For $2n \leq m \leq 2n+1$ the assertion follows easily. Hence,
assume that $m\geq 2n+2$. Set $X\isdef \{0,1,\ldots, \lfloor
{m\over 2} \rfloor -1\}$ and $Y\isdef \{\lfloor {m\over 2}
\rfloor,\lfloor {m\over 2} \rfloor+1,\ldots, 2\lfloor {m\over 2}
\rfloor-1\}$. Also, consider a bijective function $f: X
\longrightarrow Y$, e.g., $f(x) \isdef \lfloor {m\over 2}
\rfloor+x$ for any $x\in X$. We show that there exists a
b-coloring $h: V(KG(m,n)) \rightarrow {[m] \choose n}$ which uses
$2{\lfloor {m\over 2} \rfloor \choose n}$ colors. We
consider three cases.\\

Case I) $m$ is even and $n$ is odd ($m=2r$ and $n=2s+1$).\\

In this case we have $X= \{0,1,\ldots,r-1\}$ and $Y=
\{r,r+1,\ldots,2r-1\}$. For any vertex $A \in V(KG(m,n))$ if
$A\subseteq X$ or $A\subseteq Y$, define $h(A)\isdef A$. If $s+1
\leq |A\cap X| \leq 2s$ (resp. $s+1 \leq |A\cap Y| \leq 2s$),
then set $h(A)\isdef B$ where $B$ is an arbitrary $n$-subset of
$Y$ (resp. $X$) such that $f(A\cap X) \cup (A\cap Y) \subseteq B$
(resp. $(A\cap X) \cup f^{-1}(A\cap Y)\subseteq B$). It is
straightforward to check that $h$ is a proper coloring. Clearly,
we use $2{\lfloor {m\over 2} \rfloor \choose n}$ colors. Now, we
show that any vertex $A\in V(KG(m,n))$, where $A\subseteq X$ or
$A\subseteq Y$, is a dominating vertex. Without loss of
generality, we can assume that $A\subseteq X$. Let $B\in
V(KG(m,n))$ where $B\subseteq X$ or $B\subseteq Y$. If $A\cap
B=\emptyset$, then it is clear that $B$ is in the neighborhood of
$A$ and also $h(B)=B$. Now, suppose that $A\cap B\not =\emptyset$
($A\neq B$). It is sufficient to show the existence of a vertex
$C\in V(KG(m,n))$ such that $C$ is adjacent to $A$ and that
$h(C)=B$. To see this, assume that $i\in B\setminus A$. Set
$C\isdef \{i\}\cup f(B \setminus \{i\})$. In view of definition
of $h$,
one can see that $h(C)=B$. Hence, $h$ is a b-coloring.\\

Case II) $m$ and $n$ are even ($m=2r$ and $n=2s$).\\

In this case we have $X= \{0,1,\ldots,r-1\}$ and $Y=
\{r,r+1,\ldots,2r-1\}$. For any vertex $A \in V(KG(m,n))$ if
$A\subseteq X$ or $A\subseteq Y$, define $h(A)\isdef A$. If
$s+1\leq |A\cap X| < 2s$ (resp. $s+1 \leq |A\cap Y| < 2s$), then
set $h(A)\isdef B$ where $B$ is an arbitrary $n$-subset of $Y$
(resp. $X$) such that $f(A\cap X) \cup (A\cap Y) \subseteq B$
(resp. $(A\cap X)\cup f^{-1}(A\cap Y) \subseteq B$).

If $A\cap X =f^{-1}(A\cap Y)$, then set $h(A) \isdef g(A\cap X)$
where $g: {X \choose s} \longrightarrow {X \choose 2s}$ is a
function which satisfies Lemma \ref{fun}. If $A\cap X \neq
f^{-1}(A\cap Y)$, $|A\cap X|=s$ and $\min((A\cap X)\Delta
f^{-1}(A\cap Y)) \in A\cap X$ (resp. $\min((A\cap X)\Delta
f^{-1}(A\cap Y)) \in f^{-1}(A\cap Y)$), then define $h(A)\isdef B$
where $B$ is an arbitrary $n$-subset such that $B\subseteq Y$
(resp. $B\subseteq X$) and $f(A\cap X) \cup (A\cap Y) \subseteq B$
(resp. $(A\cap X) \cup f^{-1}(A\cap Y)\subseteq B$). As in case
I, it is straightforward to check that $h$ is a b-coloring with
$2{\lfloor {m\over 2} \rfloor \choose
n}$ colors.\\

Case III) $m$ is odd ($m=2r+1$).\\

Let $h: V(KG(m-1,n)) \rightarrow {[m-1] \choose n}$ be a
b-coloring for the Kneser graph $KG(m-1,n)$ obtained in the
aforementioned cases. Now, we extend $h$ to a b-coloring $h':
V(KG(m,n))\rightarrow {[m] \choose n}$. Define
$$h'(A)\isdef  \left \{ \begin{array}{ll}
 h(A) & \quad m-1\not\in A\\
 &\\
 \{m-n,m-n+1,\ldots,m-1\}&  \quad m-1\in A.
 \end{array}\right. $$
It is readily seen that $h'$ is a proper coloring for the Kneser
graph $KG(m,n)$. Also, it is straightforward to check that any
vertex $A\in V(KG(m,n))$, where $A\subseteq X$ or $A\subseteq Y$,
is a dominating vertex. Moreover, one can check that the vertex
$\{m-n,m-n+1,\ldots,m-1\}$ is a dominating vertex for the new
color class. Hence, $h'$ is a b-coloring.
}
\end{proof}

It is a simple matter to check that for any graph $G$, $\chi_b(G)
\leq \Delta(G)+1$. Hence, $\chi_b(KG(m,n))=O(m^n)$. In view of
Theorems \ref{n=2} and \ref{b} we have the following corollary.
\begin{cor}
For any positive integer $n$, we have
$\chi_b(KG(m,n))=\Theta(m^n)$.
\end{cor}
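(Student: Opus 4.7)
The plan is to construct an explicit b-coloring of $KG(m,n)$ using $2{\lfloor m/2\rfloor \choose n}$ colors. Write $r := \lfloor m/2\rfloor$ and split the ground set into two halves $X = \{0,\dots,r-1\}$ and $Y = \{r,\dots,2r-1\}$ (the extra element $m-1$ will be handled separately when $m$ is odd). Fix a bijection $f : X \to Y$, e.g.\ $f(x) = x+r$. The colors will be indexed by the $n$-subsets contained entirely in $X$ or entirely in $Y$; each such ``pure'' subset is assigned its own color and is the intended dominating vertex of that color class. The task then reduces to assigning colors to the remaining mixed $n$-subsets so that the coloring is proper and the dominating property holds.

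My rule of thumb is to tilt every mixed vertex toward its minority side. Concretely, if $|A\cap X| > n/2$, set $h(A)$ to be any $n$-subset of $Y$ containing $f(A\cap X)\cup(A\cap Y)$, and symmetrically if $|A\cap Y| > n/2$. When $n$ is odd this exhausts every mixed vertex, and properness for a pair of disjoint tilted-to-the-same-side vertices $A,B$ follows from the pigeonhole count $|f(A\cap X)|+|f(B\cap X)| \geq 2s+2 = n+1$. When $n=2s$ is even there is an additional balanced case $|A\cap X|=|A\cap Y|=s$, which I split further by comparing $A\cap X$ with $f^{-1}(A\cap Y)$: tilt according to the side that contains the smallest element of their symmetric difference when the two differ, and set $h(A) := g(A\cap X)$ (with $g$ from Lemma \ref{fun}) when they coincide.

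The core verification is the dominating property for the pure vertices. Let $A\subseteq X$ be pure and let $B$ be any other color class. If $B\subseteq Y$, then $B$ itself is adjacent to $A$ with $h(B)=B$. If $B\subseteq X$ with $B\cap A\neq\emptyset$, pick any $i\in B\setminus A$ and take $C := \{i\}\cup f(B\setminus\{i\})$. Then $|C\cap X|=1$ and $|C\cap Y|=n-1$, so the hypothesis $n\geq 3$ places $C$ firmly in the tilt-to-$X$ region, forcing $h(C) = B$; and $C\cap A = \emptyset$ because $i\notin A$ and $f(B\setminus\{i\})\subseteq Y$. The odd case $m=2r+1$ is then reduced to the even case by grafting one extra color class $\{m-n,\dots,m-1\}$ onto a b-coloring of $KG(m-1,n)$, and observing that this new class has an immediate dominating vertex.

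The principal obstacle is the balanced case for even $n$: there the natural tilt is ambiguous, and one must simultaneously guarantee properness among disjoint balanced pairs and keep the dominating vertices above compatible. Lemma \ref{fun} is tailored precisely for this purpose, since its property II yields $g(A\cap X) \neq g(B\cap X)$ whenever $A\cap B = \emptyset$, which covers the balanced-balanced pairs with $A\cap X = f^{-1}(A\cap Y)$ and $B\cap X = f^{-1}(B\cap Y)$. The remaining sub-cases (balanced versus tilted, balanced with matching versus non-matching halves, and so on) amount to routine bookkeeping by tracking in which half $h(A)$ lands and what the minimum-of-symmetric-difference rule dictates.
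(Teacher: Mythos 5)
Your construction is essentially the paper's own proof of its Theorem 2: the same split of the ground set into halves $X$ and $Y$ with a bijection $f$, the same design in which the ``pure'' $n$-subsets of $X$ or $Y$ are both the colors and the intended dominating vertices, the same tilt-to-the-minority rule for mixed sets, the same resolution of the balanced case for even $n$ via the minimum of the symmetric difference together with the function $g$ of Lemma \ref{fun}, the same dominating-vertex witness $C=\{i\}\cup f(B\setminus\{i\})$, and the same one-extra-color-class extension for odd $m$. Two small pieces are still needed to obtain the Corollary as stated, which claims $\Theta(m^n)$ for \emph{every} positive integer $n$. First, $\Theta$ requires the matching upper bound, which you do not mention; it is the one-line observation $\chi_b(G)\le \Delta(G)+1={m-n \choose n}+1=O(m^n)$. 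Second, your argument genuinely needs $n\ge 3$ (as you note, for $n=2$ the witness $C$ has $|C\cap X|=|C\cap Y|=1$ and falls into the ambiguous balanced case rather than the tilt region), so the cases $n=1$ and $n=2$ are not covered by the construction: the paper closes $n=2$ by invoking Theorem A of \cite{6}, and $n=1$ is trivial since $KG(m,1)=K_m$ gives $\chi_b=m$. With those two additions your argument coincides with the paper's.
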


\ \\
{\bf Acknowledgement:} The author wishes to thank M. Alishahi and
S. Shaebani for their valuable comments.

\end{document}